\newtheorem{theorem}{Theorem}
\newcommand{\expec}{\mathbf{E}}
\newcommand{\Oh}{\mathrm{O}}
\newcommand{\oh}{\mathrm{o}}
\newcommand{\im}{\mathrm{i}}
\newcommand{\e}{\mathrm{e}}
\newcommand{\dd}{\mathrm{d}}
\title{Counting Finite Languages by Total Word Length}
\author{Stefan Gerhold}
\address{Vienna University of Technology, Wiedner Hauptstra\ss{}e 8--10,
A-1040 Vienna, Austria}
\email{sgerhold at fam.tuwien.ac.at}
\date{\today}
\begin{document}

\begin{abstract}
  We investigate the number of sets of words that can be formed from a finite alphabet,
  counted by the total length of the words in the set.
  An explicit expression for the counting sequence is derived from the generating function, and asymptotics for
  large alphabet respectively large total word length are discussed. Moreover, we derive a Gaussian limit
  law for the number of words in a random finite language.
\end{abstract}

\keywords{Finite languages, saddle point method}

\subjclass[2000]{Primary: 05A15; Secondary: 05A16} 

\maketitle

\section{Introduction and Basic Properties}

Let $f_n=f_n(m)$ denote the number of languages (i.e., sets of words) with total word length~$n$ over an alphabet
with $m\geq 2$ symbols~\cite[I.37]{FlSe09}.
For instance, $f_2(2)=5$ and $f_3(2)=16$, as seen from the listings
\[
 \{\mathsf{a},\mathsf{b}\}, \{\mathsf{aa}\}, \{\mathsf{ab}\}, \{\mathsf{ba}\}, \{\mathsf{bb}\}
\]
respectively
\begin{align*}
  &\{\mathsf{a},\mathsf{aa}\}, \{\mathsf{a},\mathsf{ab}\}, \{\mathsf{a},\mathsf{ba}\}, \{\mathsf{a},\mathsf{bb}\},   
    \{\mathsf{b},\mathsf{aa}\}, \{\mathsf{b},\mathsf{ab}\}, \{\mathsf{b},\mathsf{ba}\}, \{\mathsf{b},\mathsf{bb}\}, \{\mathsf{aaa}\}, \\   
  &  \{\mathsf{aab}\},
   \{\mathsf{aba}\}, \{\mathsf{abb}\}, \{\mathsf{baa}\}, \{\mathsf{bab}\}, \{\mathsf{bba}\}, \{\mathsf{bbb}\}.
\end{align*}
Another value is $f_2(3)=12$, illustrated by
\begin{equation*}
  \{\mathsf{aa}\}, \{\mathsf{ab}\}, \{\mathsf{ac}\}, \{\mathsf{ba}\}, \{\mathsf{bb}\}, \{\mathsf{bc}\}, \{\mathsf{ca}\}, \{\mathsf{cb}\},   
    \{\mathsf{cc}\}, \{\mathsf{a},\mathsf{b}\}, \{\mathsf{a},\mathsf{c}\}, \{\mathsf{b},\mathsf{c}\}.
\end{equation*}
The sequence~$f_n(2)$ is number {\bf A102866} of Sloane's On-Line Encyclopedia of Integer Sequences.%
\footnote{http://www.research.att.com/\~{}njas/sequences/}
In the present note, we will derive an explicit expression for~$f_n(m)$ (Theorem~\ref{thm:clf} below),
establish asymptotics (Sections~\ref{se:asympt m}--\ref{se:joint}),
and derive a limit law for the number of words in a random finite language (Section~\ref{se:law}).

The ordinary generating function (ogf)~\cite[I.37]{FlSe09}
\begin{equation}\label{eq:ogf}
  F(z) := \sum_{n=0}^\infty  f_n z^n = \exp \left( \sum_{k=1}^\infty
    \frac{(-1)^{k-1}}{k} \frac{mz^k}{1-mz^k} \right)
\end{equation}
can be obtained by a standard procedure (the ``power set construction''~\cite[I.2]{FlSe09}; finite languages
are sets of sequences built from alphabet elements).
Its first terms are
\begin{equation}\label{eq:first terms}
  F(z) = 1 + mz + \tfrac12 m(3m-1)z^2 + m(\tfrac{13}{6}m^2-\tfrac12 m+\tfrac13) + \Oh(z^4).
\end{equation}
Note that
\[
  F(z) = \exp\left( \frac{mz}{1-mz} \right) \phi(z),
\]
where
\[
  \phi(z;m) = \phi(z) := \exp \left( \sum_{k=2}^\infty \frac{(-1)^{k-1}}{k} \frac{mz^k}{1-mz^k} \right)
\]
is analytic for $|z|<1/\sqrt{m}$.
(Indeed, for $0<\varepsilon<1/\sqrt{m}$, $|z|\leq 1/\sqrt{m}-\varepsilon$, and $k\geq 2$, we have
\begin{equation*}
  m|z|^k \leq m|z|^2 \leq m (m^{-1/2}-\varepsilon)^2
  = 1 - \varepsilon \sqrt{m}(2-\varepsilon \sqrt{m}) =: 1 - \varepsilon',
\end{equation*}
whence
\[
  \left| \frac{mz^k}{1-mz^k} \right| \leq \frac{m|z|^k}{\varepsilon'}.)
\]
The dominating singularity of~$F(z)$ is thus located at~$z=1/m$, leading to the rough
approximation $f_n(m) \approx m^n$.
Clearly (consider languages consisting only of one word), we have $f_n(m)>m^n$ for $m,n\geq2$.
We will see in Theorem~\ref{thm:asympt n} below that the ratio $f_n(m)/m^n$ is~$\e^{2\sqrt{n} + \Oh(\log n)}$.

Our first result is an explicit expression for~$f_n(m)$, which
can be obtained from~\eqref{eq:ogf}. To state it,
we write $\mathbf{i} \vdash\! n$, if the vector $\mathbf{i}=(i_1,\dots,i_n)\in\mathbb{Z}_{\geq0}^n$
represents a partition of~$n$, in the sense
that $i_1 + 2i_2 + \dots + ni_n = n$.
\begin{theorem}\label{thm:clf}
  For $m\geq2$ and $n\geq1$, we have
  \begin{equation}\label{eq:clf}
    f_n(m) = \sum_{\mathbf{i}\, \vdash n} \frac{A_1(m)^{i_1} \dots A_n(m)^{i_n}}{i_1! \dots i_n!},
  \end{equation}
  where
  \[
    A_j(m) := \sum_{d \mid j} (-1)^{d-1} m^{j/d}/d, \qquad j\geq1.
  \]
\end{theorem}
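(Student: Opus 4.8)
The plan is to extract the coefficients $f_n$ from the generating function $F(z)$ in~\eqref{eq:ogf} by first rewriting the exponent as a single power series and then applying the standard multivariate exponential formula. First I would expand each summand of the exponent into a geometric series: since
\[
  \frac{(-1)^{k-1}}{k}\frac{mz^k}{1-mz^k}
    = \frac{(-1)^{k-1}}{k}\sum_{\ell\ge1} m^\ell z^{k\ell},
\]
I can write $\log F(z) = \sum_{k\ge1}\sum_{\ell\ge1} \frac{(-1)^{k-1}}{k} m^\ell z^{k\ell}$. Collecting the coefficient of $z^j$ means summing over all factorizations $j = k\ell$, i.e.\ over divisors $d = k$ of $j$ with $\ell = j/d$. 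This yields $\log F(z) = \sum_{j\ge1} A_j(m)\, z^j$ with exactly the claimed coefficients $A_j(m) = \sum_{d\mid j} (-1)^{d-1} m^{j/d}/d$. The interchange of summation order is justified by the absolute convergence guaranteed for $|z| < 1/\sqrt{m}$, which follows from the analyticity argument already given in the excerpt for $\phi(z)$.

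Next I would exponentiate. Writing $F(z) = \exp\bigl(\sum_{j\ge1} A_j z^j\bigr)$ and expanding the outer exponential as $\sum_{r\ge0} \frac{1}{r!}\bigl(\sum_{j\ge1} A_j z^j\bigr)^r$, the coefficient of $z^n$ is obtained by the multinomial theorem. Each term in the expansion picks, for the $r$ factors, indices whose associated powers of $z$ multiply to $z^n$; grouping these by how many times each index $j$ is used gives a tuple $(i_1,\dots,i_n)$ of nonnegative integers recording the multiplicities. The constraint that the total degree equals $n$ is precisely $1\cdot i_1 + 2\cdot i_2 + \dots + n\cdot i_n = n$, that is $\mathbf{i}\vdash n$, and the multinomial coefficient collapses to $1/(i_1!\cdots i_n!)$ after accounting for $r = i_1 + \dots + i_n$ and the $r!$ in the denominator. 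This produces exactly~\eqref{eq:clf}.

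The cleanest way to package the second step is to invoke the exponential formula (equivalently, the relation between the power sums $A_j$ and the complete Bell / partition polynomials), so that $[z^n]\exp\bigl(\sum_j A_j z^j\bigr) = \sum_{\mathbf{i}\vdash n} \prod_j A_j^{i_j}/i_j!$ is a known identity; I would cite the standard reference rather than rederive it. The only genuine care needed is bookkeeping: confirming that the divisor sum in $A_j$ comes out correctly with the sign $(-1)^{d-1}$ attached to $d = k$ (not to $j/d$), and confirming the convergence interchange. Neither is a real obstacle — the result is essentially a direct computation — so the \emph{main difficulty is purely organizational}: presenting the two formal manipulations (reindexing the double sum, and applying the exponential formula) transparently enough that the reader sees why the partition structure and the factorials emerge.
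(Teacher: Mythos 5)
Your proposal is correct and follows essentially the same route as the paper: expand the Lambert series in the exponent as a double sum, reindex over divisors to obtain $\log F(z)=\sum_j A_j(m)z^j$, then extract $[z^n]$ of the exponential via the multinomial expansion over partitions. The only difference is presentational --- the paper carries out the multinomial expansion explicitly rather than citing the exponential formula.
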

\begin{proof}
  We expand the Lambert series~\cite{Wi06} in the exponent of~$F(z)$, using the geometric series formula:
  \begin{align}
    F(z) &= \exp\left( \sum_{k=1}^\infty \frac{(-1)^{k-1}}{k}
      \sum_{j=1}^\infty m^j z^{kj}  \right) \notag \\
    &= \exp\left( \sum_{n=1}^\infty A_n(m) z^n  \right) \notag \\
    &= 1 + \frac{1}{1!} \left( \sum_{n=1}^m A_n(m) z^n \right) + \frac{1}{2!}
      \left( \sum_{n=1}^m A_n(m) z^n \right)^2 + \dots \label{eq:exp A}
  \end{align}
  The $k$-th term here can be expanded as
  \begin{align}
    \Biggl( \sum_{n=1}^m A_n & (m) z^n \Biggr)^k
      = \sum_{  i_1+\dots+i_m=k } \binom{k}{i_1,\dots,i_m} (A_1 z)^{i_1}(A_2 z^2)^{i_2} \dots (A_m z^m)^{i_m} \notag \\
    &= \sum_{ \substack{i_1+\dots+i_m=k\\ i_1+2i_2+\dots+mi_m \leq m} } \binom{k}{i_1,\dots,i_m}
      A_1^{i_1} \dots A_m^{i_m} z^{i_1+2i_2+\dots+mi_m} + \Oh(z^{m+1}) \notag \\
    &= \sum_{n=1}^m \left( \sum_{ \substack{\mathbf{i}\, \vdash n\\ i_1+\dots+i_n=k} }
       \binom{k}{i_1,\dots,i_n}  A_1^{i_1} \dots A_n^{i_n}\right) z^n + \Oh(z^{m+1}). \label{eq:binom}
  \end{align}
  Now~\eqref{eq:clf} follows from~\eqref{eq:exp A} and~\eqref{eq:binom}.
\end{proof}

\section{Asymptotics for Large Alphabet Size}\label{se:asympt m}

Next we derive the asymptotics of~$f_n(m)$ as~$m$, the cardinality of the alphabet,
tends to infinity. Define~$\kappa_n$ and $\mu_n = \mu_n(m)$ by
\[
  \sum_{n=0}^\infty \kappa_n z^n = \exp\left( \frac{z}{1-z} \right) \quad
  \text{and} \quad \sum_{n=0}^\infty \mu_n z^n = \phi(z).
\]
Note that $n!\kappa_n$ is Sloane's {\bf A000262} (several combinatorial interpretations are
given on that web page),
and that~$\kappa_n$ has the representation %TODO reference
\begin{equation}\label{eq:kappa}
  \kappa_n = \sum_{\mathbf{i}\, \vdash n} \frac{1}{i_1! \dots i_n!}, \qquad n\geq1.
\end{equation}
%
%The first values of~$n!\kappa_n$ are
%\[
%  1, 3, 13, 73, 501, 4051, 37633, 394353, 4596553, 58941091.
%\]
%
Then we can write
\begin{align*}
  f_n/m^n &= [z^n] \exp\left( \frac{z}{1-z} \right) \phi(z/m) \\
  &= \kappa_n + \kappa_{n-1} \mu_1/m + \dots + \kappa_0 \mu_n/m^n.
\end{align*}
If the dependence of~$\mu_n$ on~$m$ is not too strong, the first term on the right-hand
side should dominate when~$m\to\infty$. This is indeed the case:
\begin{theorem}
  If~$n\geq1$ is fixed and $m\to\infty$, we have
  \begin{equation}\label{eq:asympt m}
    f_n(m) \sim \kappa_n m^n.
  \end{equation}
\end{theorem}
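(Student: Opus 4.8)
The plan is to show that, in the decomposition
\[
  f_n(m)/m^n = \kappa_n + \sum_{j=1}^n \kappa_{n-j}\,\mu_j(m)/m^j,
\]
every summand with $j\geq1$ tends to zero as $m\to\infty$, while $\kappa_n$ is a positive constant. Since the $\kappa_{n-j}$ do not depend on~$m$, the whole matter reduces to a growth estimate for the coefficients $\mu_j(m)=[z^j]\phi(z;m)$, namely $\mu_j(m)=\Oh(m^{\lfloor j/2\rfloor})$; this already gives $\mu_j(m)/m^j=\Oh(m^{-\lceil j/2\rceil})\to0$ for each fixed $j\geq1$. The point to be careful about is that $\phi(z;m)$ itself depends on~$m$ and its radius of convergence $1/\sqrt m$ shrinks as $m\to\infty$, so the estimate cannot come from analyticity alone; one really has to track the power of~$m$.

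To obtain the bound, I would expand the exponent of $\phi$ as a Lambert series, exactly as in the proof of Theorem~\ref{thm:clf} but with the sum over $k$ starting at~$2$:
\[
  \phi(z;m) = \exp\Biggl( \sum_{k=2}^\infty \frac{(-1)^{k-1}}{k}\sum_{d=1}^\infty m^d z^{kd} \Biggr)
            = \exp\Biggl( \sum_{j=2}^\infty B_j(m)\, z^j \Biggr),
\]
where $B_j(m):=\sum_{k\mid j,\,k\geq2}(-1)^{k-1}m^{j/k}/k = A_j(m)-m^j$. Because the smallest divisor $k\geq2$ of~$j$ is at least~$2$, the dominant power here is $m^{\lfloor j/2\rfloor}$, so $B_j(m)=\Oh(m^{\lfloor j/2\rfloor})$ (and $B_1(m)=0$, reflecting $\mu_1(m)=0$).

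Exponentiating and collecting the coefficient of $z^j$ then gives
\[
  \mu_j(m) = \sum_{\substack{\mathbf{i}\,\vdash j\\ i_1=0}}
     \frac{B_1(m)^{i_1}\cdots B_j(m)^{i_j}}{i_1!\cdots i_j!},
\]
a finite sum over partitions of~$j$ with no part equal to~$1$. For any such partition one has $\sum_{\ell\geq2}\ell\,i_\ell=j$, and since $\lfloor \ell/2\rfloor\leq \ell/2$ the $m$-degree of the corresponding monomial is $\sum_{\ell}i_\ell\lfloor\ell/2\rfloor\leq \tfrac12\sum_\ell \ell\,i_\ell = j/2$, hence at most $\lfloor j/2\rfloor$ as it is an integer. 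Thus each monomial, and so $\mu_j(m)$, is $\Oh(m^{\lfloor j/2\rfloor})$, which is the desired bound.

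I expect the only real obstacle to be this degree bookkeeping; once it is in place, substituting back into the decomposition and using $\kappa_n>0$ (a nonempty sum of positive terms, by~\eqref{eq:kappa}) yields $f_n(m)/m^n\to\kappa_n$, that is, \eqref{eq:asympt m}.
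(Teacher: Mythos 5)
Your proof is correct, but it takes a genuinely different route from the paper's. The paper proves \eqref{eq:asympt m} directly from the explicit formula \eqref{eq:clf}: it observes that $A_j(m)=m^j+\Oh(m^{j/2})$, hence $A_1(m)^{i_1}\cdots A_n(m)^{i_n}=m^n(1+\Oh(m^{-1/2}))$ for every $\mathbf{i}\vdash n$, and then sums over partitions using \eqref{eq:kappa}. You instead carry out the program that the paper only sketches in the lead-up to the theorem, namely the convolution $f_n/m^n=\sum_{j=0}^n\kappa_{n-j}\mu_j(m)/m^j$, and you supply the missing ingredient: the degree bound $\mu_j(m)=\Oh(m^{\lfloor j/2\rfloor})$, obtained by expanding $\phi$ as $\exp\bigl(\sum_{j\geq2}B_j(m)z^j\bigr)$ with $B_j=A_j-m^j$ and running the same partition-sum bookkeeping restricted to partitions with no part equal to $1$. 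The two arguments rest on the same underlying fact (the $d=1$ term dominates the divisor sum $A_j$, all other terms having $m$-degree at most $j/2$), so neither is deeper than the other; the paper's version is shorter because it never needs to isolate the coefficients of $\phi$, while yours makes precise the informal remark that ``the dependence of $\mu_n$ on $m$ is not too strong'' and incidentally yields the slightly sharper relative error $\Oh(m^{-1})$ in place of the paper's $\Oh(m^{-1/2})$. Your degree count is sound: each monomial in $\mu_j$ has $m$-degree $\sum_\ell i_\ell\lfloor\ell/2\rfloor\leq j/2$, hence at most $\lfloor j/2\rfloor$, and $\kappa_n>0$ follows from \eqref{eq:kappa} as you say.
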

\begin{proof}
  Since, as $m\to\infty$,
  \[
    A_j(m) = m^j + \Oh(m^{j/2}), \qquad j\geq1,
  \]
  we have
  \[
    A_j(m)^{i_j} = m^{j i_j}(1 + \Oh(m^{-j/2})),
  \]
  whence, for $\mathbf{i} \vdash n$,
  \[
    A_1(m)^{i_1} \dots A_n(m)^{i_n} = m^n (1 + \Oh(m^{-1/2})).
  \]
  The result thus follows from~\eqref{eq:clf} and~\eqref{eq:kappa}.
\end{proof}
Note that $\kappa_1=1$, $\kappa_2=\tfrac32$, and $\kappa_3=\tfrac{13}{6}$, in line with~\eqref{eq:first terms}.

\section{Asymptotics for Large Total Word Length}\label{se:asympt n}

\begin{theorem}\label{thm:asympt n}
For large total word length~$n$, the sequence $f_n=f_n(m)$ has the asymptotics
\begin{equation}\label{eq:asympt n}
  f_n \sim \frac{\phi(1/m)}{2\sqrt{\e \pi}} \times \frac{m^n \e^{2\sqrt{n}}}{n^{3/4}}, \qquad n\to\infty.
\end{equation}
More precisely, there is a full asymptotic expansion of the form
\begin{equation}\label{eq:full expans}
  f_n \sim \frac{\phi(1/m)}{2\sqrt{\e \pi}} \times \frac{m^n \e^{2\sqrt{n}}}{n^{3/4}}
    \left( 1 + \sum_{j\geq 1} c_j n^{-j/4} \right), \qquad n\to\infty.
\end{equation}
\end{theorem}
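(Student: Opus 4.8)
The plan is to establish the asymptotics of $f_n = [z^n] F(z)$ by the saddle point method, exploiting the factorization $F(z) = \exp\bigl(\frac{mz}{1-mz}\bigr)\phi(z)$ already recorded in the excerpt. Since $\phi$ is analytic on $|z|<1/\sqrt{m}$ while $\exp\bigl(\frac{mz}{1-mz}\bigr)$ has an essential singularity at $z=1/m$, the dominant contribution comes from that essential singularity, and $\phi$ contributes only the constant factor $\phi(1/m)$ to leading order. I would first rescale by writing $f_n = m^n [z^n]\exp\bigl(\frac{z}{1-z}\bigr)\phi(z/m)$, reducing the problem to extracting the $n$-th coefficient of $G(z):=\exp\bigl(\frac{z}{1-z}\bigr)\phi(z/m)$, whose only singularity on its circle of convergence is the essential singularity of $\exp\bigl(\frac{z}{1-z}\bigr)$ at $z=1$.

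Next I would set up Cauchy's integral formula on a circle $|z|=r$ with $r<1$ close to $1$, and locate the saddle point of the integrand $\exp\bigl(\frac{z}{1-z}\bigr)z^{-n}$. Writing the exponent as $h(z)=\frac{z}{1-z} - n\log z$, the saddle-point equation $h'(z)=0$ gives $\frac{1}{(1-z)^2} = \frac{n}{z}$, so that $z_0 = 1 - n^{-1/2} + \Oh(n^{-1})$ as $n\to\infty$. The value $h(z_0)$ produces the leading growth: $\frac{z_0}{1-z_0}\approx \sqrt{n}-1$ and $-n\log z_0 \approx \sqrt{n}$, which combine to give the factor $\e^{2\sqrt{n}-1}=\e^{2\sqrt{n}}/\e$, explaining the $\e^{-1/2}$ hidden in the constant $\frac{1}{2\sqrt{\e\pi}}$ together with the Gaussian normalization $\frac{1}{\sqrt{2\pi h''(z_0)}}$. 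The second derivative $h''(z_0)$ scales like $n^{3/2}$ up to constants, which after the Gaussian integration yields the $n^{-3/4}$ prefactor and the $\sqrt{\pi}$. Throughout, $\phi(z_0/m) = \phi(1/m)(1+\oh(1))$ since $z_0\to1$ and $\phi$ is continuous there, supplying the constant $\phi(1/m)$.

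To obtain the full asymptotic expansion in powers of $n^{-1/4}$, I would carry the saddle-point analysis to all orders. The natural variable is $t = n^{1/2}(1-z)$ near the saddle, which turns the singular part $\frac{z}{1-z}$ and the factor $z^{-n}$ into expansions in half-integer powers of $n$; because the relevant small parameter after the change of variables is $n^{-1/4}$, the resulting series for the local integral runs over powers of $n^{-j/4}$, matching the form claimed in~\eqref{eq:full expans}. Concretely I would expand $h(z)$ about $z_0$ in a Taylor series, substitute the local Gaussian variable, expand the analytic factor $\phi(z/m)$ and the higher-order terms $\exp\bigl(\text{cubic and beyond}\bigr)$, and integrate term by term, collecting contributions into the coefficients $c_j$. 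The admissibility (Hayman-type) framework, or equivalently a direct verification that $\exp\bigl(\frac{z}{1-z}\bigr)$ is an admissible function, guarantees both that the saddle-point contribution dominates and that the tails of the contour are negligible.

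The main obstacle I expect is twofold. First, rigorously bounding the contribution of the arc of the contour away from the saddle point: one must show that on $|z|=r$, away from $z=z_0$, the modulus $\bigl|\exp\bigl(\frac{z}{1-z}\bigr)\bigr|$ decays fast enough relative to its peak value, which is the standard but delicate tail estimate in the saddle point method and is cleanest to handle by invoking Hayman's admissibility theorem for the factor $\exp\bigl(\frac{z}{1-z}\bigr)$. Second, organizing the local expansion so that the half-integer powers of $n$ coming from $h(z_0)$ and $h''(z_0)$ combine with the even powers from the Gaussian moments to produce exactly a series in $n^{-1/4}$ without spurious intermediate powers; this bookkeeping, rather than any single hard estimate, is where care is needed to justify the precise exponent $-3/4$ and the clean constant $\frac{\phi(1/m)}{2\sqrt{\e\pi}}$.
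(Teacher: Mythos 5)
Your proposal is correct and follows essentially the same route as the paper: Cauchy's formula for $m^n[z^n]\exp(z/(1-z))\phi(z/m)$, a saddle point at $z\approx 1-n^{-1/2}$, a local Gaussian approximation yielding the constant $\phi(1/m)/(2\sqrt{\e\pi})$ and the $n^{-3/4}$ factor, and higher-order terms for the full expansion in powers of $n^{-1/4}$. The only cosmetic difference is that you delegate the tail bound to Hayman admissibility (which the paper notes holds) while the paper bounds the off-saddle arc directly via the monotonic decrease of $\exp\Re(1/(1-z))$ in $|\arg z|$.
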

\begin{proof}
The proof of Theorem~\ref{thm:asympt n} is similar to the saddle point analysis~\cite[Example~VIII.7]{FlSe09}
of $\exp(z/(1-z))$, the ogf of~$\kappa_n$, slightly perturbed by the presence of the factor~$\phi(z)$.
The ogf~$F(z)$ is actually Hayman-admissible~\cite{FlSe09, Ha56}, but carrying out the saddle point method
explicitly gives access to a full asymptotic expansion, and will be useful for the refined
expansions required in Sections~\ref{se:joint}
and~\ref{se:law}.
Let us shift the dominating singularity from $z=1/m$ to $z=1$.
Then the integrand in Cauchy's formula
\begin{equation}\label{eq:cauchy}
  f_n = f_n(m) = \frac{m^n}{2\im \pi} \oint \frac{F(z/m)}{z^{n+1}} \dd z
\end{equation}
has an approximate saddle point at $z=\hat{z}:=1-1/\sqrt{n}$.
We write $z=\hat{z}\e^{\im\theta}$, where $\theta=\arg(z)$
is constrained by
\begin{equation}\label{eq:theta}
  |\theta| < n^{-\alpha}, \qquad \tfrac23 < \alpha < \tfrac34,
\end{equation}
so that~$z$ lies in a small arc around the saddle point. In this range we have the uniform expansion
\begin{align}
  z^{-n-1} &= \exp\left(-(n+1)\log\left(1-\frac{1}{\sqrt{n}}\right) - \im n \theta + \Oh(n^{-\alpha}) \right) \notag \\
  &= \exp\bigl( \sqrt{n} + \tfrac12 - \im n \theta + \Oh(n^{-1/2})\bigr), \qquad n\to\infty. \label{eq:z}
\end{align}
Furthermore
\[
  1-z = n^{-1/2} (1-\im\theta\sqrt{n} + \Oh(n^{-\alpha}) ),
\]
hence
\begin{equation}\label{eq:recipr}
  \frac{1}{1-z} = \sqrt{n} + \im \theta n - n^{3/2}\theta^2 + \Oh(n^{1/2-\alpha}).
\end{equation}
From~\eqref{eq:z} and~\eqref{eq:recipr} we get
\[
  z^{-n-1} \exp\left(\frac{z}{1-z}\right) = \exp\bigl( -\tfrac12 + 2\sqrt{n} - n^{3/2}\theta^2 \bigr)
  \times\bigl( 1 + \Oh(n^{1/2 - \alpha}) \bigr).
\]
Since $\phi(z/m)$ is analytic at $z=1$, the local expansion of the integrand in~\eqref{eq:cauchy}
at the saddle point~$\hat{z}$ is
\begin{equation}\label{eq:loc expans}
  \frac{F(z/m)}{z^{n+1}} = \phi(1/m) \exp\bigl(-\tfrac12 + 2\sqrt{n} - n^{3/2}\theta^2 \bigr)
  \times\bigl(1 + \Oh(n^{1/2 - \alpha})\bigr),
\end{equation}
valid as $n\to\infty$, uniformly w.r.t.~$\theta$ in the range~\eqref{eq:theta}.
Note that
\[
  \int_{-n^{-\alpha}}^{n^{-\alpha}} \e^{-n^{3/2}\theta^2} \dd \theta \sim \sqrt{\pi}n^{-3/4},
\]
so that integrating~\eqref{eq:loc expans} from~$-n^{-\alpha}$ to~$n^{-\alpha}$
yields the right-hand side of~\eqref{eq:asympt n}.
To prove~\eqref{eq:asympt n},
it remains to show that the integral from~$n^{-\alpha}$ to~$\pi$ grows slower
(the other half of the tail is handled by symmetry).
There is a~$C>0$ such that
\begin{equation}\label{eq:tail ineq}
  \left| \frac{F(z/m)}{z^{n+1}} \right| \leq C |z|^{-n} \exp \Re \left(\frac{1}{1-z}\right), \qquad |z|<1.
\end{equation}
If~$z=\hat{z}\e^{\im\theta}$ lies on the integration contour, then the factor~$|z|^{-n}$ in~\eqref{eq:tail ineq} is $\Oh(\e^{\sqrt{n}})$.
The remaining factor $\exp \Re (1/(1-z))$ decreases if~$|\theta|=|\arg(z)|$ increases, hence
\begin{align*}
  \int_{n^{-\alpha}}^\pi  \exp \Re \left(\frac{1}{1-\hat{z}\e^{\im \theta}}\right) \dd \theta 
  &\leq \pi \left. \exp \Re \left(\frac{1}{1-\hat{z}\e^{\im \theta}}\right)\right|_{\theta=n^{-\alpha}} \\
  &= \exp\bigl( \sqrt{n} - n^{3/2 - 2\alpha} + \Oh(1) \bigr).
\end{align*}
(The last line is obtained by recapitulating the derivation of~\eqref{eq:loc expans}, with $\theta=n^{-\alpha}$.)
Hence
\begin{equation}\label{eq:tail est}
  \left| \oint_{n^{-\alpha} < |\theta| < \pi} \frac{F(z/m)}{z^{n+1}} \dd z \right|
    \leq \exp\bigl( 2\sqrt{n} - n^{3/2 - 2\alpha} + \Oh(1) \bigr).
\end{equation}
This grows indeed slower than~$\e^{2\sqrt{n}}/n^{3/4}$, so that the proof of~\eqref{eq:asympt n}
is complete.

It remains to justify the full expansion~\eqref{eq:full expans}.
First note that it suffices to check that the central part $\int_{-n^{-\alpha}}^{n^{\alpha}}\dd\theta$
of the Cauchy integral has such an expansion,
as the tail estimate~\eqref{eq:tail est} lies asymptotically below~\eqref{eq:full expans}.
The full expansion of $z^{-n-1}$ proceeds by powers of~$n^{-1/2}$:
\[
  z^{-n-1} = \exp \left( \sqrt{n} + \tfrac12 - \im n \theta -\im \theta +
    \sum_{j\geq1} \frac{2(j+1)}{j(j+2)} n^{-j/2} \right).
\]
Taking more terms in~\eqref{eq:recipr} and in the expansion of the analytic function~$\phi$, we readily see that
the full local expansion of the integrand in~\eqref{eq:cauchy} is of the form
\[
  \frac{F(z/m)}{z^{n+1}} = \phi(1/m) \exp\bigl(-\tfrac12 + 2\sqrt{n} - n^{3/2}\theta^2\bigr)
  \times\left(1 + \sum_{k,j} c_{kj} n^{-k/2} \theta^j\right),
\]
where each term in the sum is~$\oh(1)$.
The resulting Gaussian integrals are of the kind
\begin{align*}
  \int_{-n^{-\alpha}}^{n^{-\alpha}} \theta^{M} \e^{-n^{3/2}\theta^2} \dd \theta &=
    n^{-3/4} \int_{-n^{3/4-\alpha}}^{n^{3/4-\alpha}} \left( \frac{y}{n^{3/4}} \right)^M \e^{-y^2} \dd y \\
  & \sim \mathrm{const} \times n^{-3(M+1)/4}, \qquad M\ \text{even}.
\end{align*}
(Those with odd~$M$ vanish.) This finishes the proof of~\eqref{eq:full expans}.
\end{proof}

\section{Joint Asymptotics}\label{se:joint}

Note that the limits $m\to\infty$ and $n\to\infty$ commute in the following sense:
Since we have $\kappa_n \sim 1/(2\sqrt{\e \pi}) \e^{2\sqrt{n}}/n^{3/4}$~\cite[Prop.~8.4]{FlSe09},
the right-hand side of~\eqref{eq:asympt m} has, as $n\to\infty$, the same asymptotics
as the right-hand side of~\eqref{eq:asympt n} for $m\to\infty$. We will now show that
letting~$m$ and~$n$ tend to infinity simultaneously yields the same result, regardless of
their respective speeds.

\begin{theorem}
  If both the word length and the alphabet size tend to infinity, we have
  \[
    f_n(m) \sim \frac{1}{2\sqrt{\e \pi}} \times \frac{m^n \e^{2\sqrt{n}}}{n^{3/4}}, \qquad m,n\to\infty.
  \]
\end{theorem}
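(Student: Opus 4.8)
The plan is to observe that the asserted asymptotics differ from those of Theorem~\ref{thm:asympt n} only in that the prefactor $\phi(1/m)$ is replaced by~$1$. Since $\phi(1/m)\to1$ as $m\to\infty$, the entire content of the statement is that the saddle point estimate leading to~\eqref{eq:asympt n} can be made uniform in~$m$, so that $m$ and $n$ may be sent to infinity simultaneously along an arbitrary path. To this end I would first isolate the $m$-dependence of the integrand in Cauchy's formula~\eqref{eq:cauchy}. Setting $\psi(z;m):=\phi(z/m;m)$, the substitution $w=z/m$ in $F(w)=\exp(mw/(1-mw))\phi(w;m)$ gives
\[
  \frac{F(z/m)}{z^{n+1}}=\exp\!\left(\frac{z}{1-z}\right)z^{-n-1}\,\psi(z;m),
  \qquad
  \psi(z;m)=\exp\!\left(\sum_{k\geq2}\frac{(-1)^{k-1}}{k}\frac{m^{1-k}z^k}{1-m^{1-k}z^k}\right),
\]
and $\psi(1;m)=\phi(1/m)$. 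The factor $\exp(z/(1-z))\,z^{-n-1}$ carries no dependence on~$m$, so its local expansion at the saddle point and the tail estimates of Theorem~\ref{thm:asympt n} remain valid verbatim and uniformly in~$m$; only $\psi(z;m)$ needs to be controlled.

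The key estimate I would establish is that $\psi(z;m)=1+\Oh(1/m)$ uniformly for $|z|\leq1$ and all $m\geq2$. Indeed, for $|z|\leq1$, $k\geq2$ and $m\geq2$ one has $m^{1-k}|z|^k\leq m^{-1}\leq\tfrac12$, whence $|1-m^{1-k}z^k|\geq\tfrac12$ and
\[
  \bigl|\log\psi(z;m)\bigr|\leq\sum_{k\geq2}\frac1k\,\frac{m^{1-k}|z|^k}{|1-m^{1-k}z^k|}
  \leq 2\sum_{k\geq2}m^{1-k}=\frac{2}{m-1}.
\]
In particular $\psi(z;m)\to1$ as $m\to\infty$, uniformly in~$z$ (hence at the saddle point $\hat z=1-1/\sqrt n$ uniformly in~$n$), and $|\psi(z;m)|\leq\e^{2}$ on the whole contour $|z|=\hat z<1$, a bound independent of both $m$ and~$n$.

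With this in hand the two halves of the integral assemble as follows. On the central arc $|\theta|<n^{-\alpha}$, the only change from the proof of Theorem~\ref{thm:asympt n} is that the constant $\phi(1/m)$ multiplying the integrand in~\eqref{eq:loc expans} is now the factor $\psi(\hat z\e^{\im\theta};m)=1+\Oh(1/m)$, uniform in~$\theta$; pulling this uniform factor out of the Gaussian integral reproduces the main term of~\eqref{eq:asympt n} with $\phi(1/m)$ replaced by
\[
  \frac{1}{2\sqrt{\e\pi}}\,\frac{m^n\e^{2\sqrt n}}{n^{3/4}}\bigl(1+\Oh(1/m)\bigr)\bigl(1+\Oh(n^{1/2-\alpha})\bigr).
\]
On the tail $n^{-\alpha}<|\theta|<\pi$, the uniform bound $|\psi|\leq\e^{2}$ lets one take the constant~$C$ in~\eqref{eq:tail ineq} independent of~$m$, so that~\eqref{eq:tail est} survives unchanged and stays asymptotically negligible. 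Consequently the relative error is $\Oh(1/m)+\Oh(n^{1/2-\alpha})$, which tends to~$0$ whenever $m,n\to\infty$, irrespective of the relation between them, giving the claim. The only genuine obstacle is the uniformity in~$m$, i.e.\ guaranteeing that the single $m$-dependent factor $\phi(z/m;m)$ is simultaneously close to~$1$ at the saddle and uniformly bounded on the tail; the elementary bound $|\log\psi|\leq 2/(m-1)$ resolves both at once.
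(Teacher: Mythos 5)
Your proposal is correct and follows essentially the same route as the paper: both isolate the single $m$-dependent factor $\phi(\hat z\e^{\im\theta}/m;m)$, show via an elementary geometric-series bound on its logarithm that it tends to $1$ uniformly on the contour, and then reuse the saddle point analysis of Theorem~\ref{thm:asympt n} unchanged. Your version is slightly more quantitative (an explicit $\Oh(1/m)$ relative error and the uniform bound $|\psi|\leq\e^2$ on the tail), but the substance is identical.
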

\begin{proof}
  The result can be obtained by an adaption of the proof of Theorem~\ref{thm:asympt n}.
  Again we use Cauchy's formula, with the same saddle point contour as before:
  \begin{align}
    f_n(m) &= \frac{m^n}{2\pi}\hat{z}^{-n} \int_{-\pi}^{\pi}
      F(\hat{z}\e^{\im\theta}/m) \e^{-\im(n+1)\theta}\dd\theta \notag \\
    &= \frac{m^n}{2\pi}\hat{z}^{-n} \int_{-\pi}^{\pi}
      \exp\left(\frac{\hat{z}\e^{\im\theta}}{1-\hat{z}\e^{\im\theta}}\right)
      \phi\Bigl(\frac{\hat{z}\e^{\im\theta}}{m}; m\Bigr) \e^{-\im(n+1)\theta}\dd\theta. \label{eq:cauchy2}
  \end{align}
  We will show that
  \begin{equation}\label{eq:phi est}
    \phi\Bigl(\frac{\hat{z}\e^{\im\theta}}{m}; m\Bigr) \to 1, \qquad m,n\to\infty,\
    \text{uniformly w.r.t.}\ \theta\in{[-\pi,\pi]}.
  \end{equation}
  Assuming this we are done. Indeed, assertion~\eqref{eq:phi est} shows at the same time the validity
  of the local expansion~\eqref{eq:loc expans}, with $\phi(1/m)$ replaced by~$1$, and the persistence of
  the tail estimate~\eqref{eq:tail est}.
  
  To prove~\eqref{eq:phi est}, notice that
  \begin{equation}\label{eq:phi factor}
    \phi\Bigl(\frac{\hat{z}\e^{\im\theta}}{m}; m\Bigr)
      = \exp\left( \sum_{k=2}^\infty \frac{(-1)^{k-1}}{k}
      \frac{ m^{1-k} \hat{z}^k \e^{k\im\theta} }{ 1 - m^{1-k} \hat{z}^k \e^{k\im\theta} }\right).
  \end{equation}
  We have $|m^{1-k} \hat{z}^k \e^{k\im\theta}|<\tfrac12$ for $m\geq2$, hence
  \begin{align*}
    \left| \sum_{k=2}^\infty \frac{(-1)^{k-1}}{k} 
      \frac{ m^{1-k} \hat{z}^k \e^{k\im\theta} }{ 1 - m^{1-k} \hat{z}^k \e^{k\im\theta} }\right|
       &\leq \sum_{k=2}^\infty m^{1-k} \hat{z}^k \\
      &= \sum_{k=2}^\infty m^{1-k} \left(1-\frac{1}{\sqrt{n}}\right)^k \\      
      &= \frac{(1-1/\sqrt{n})^2}{m(1-1/m+1/(m\sqrt{n}))}.
  \end{align*}
  Thus the exponent in~\eqref{eq:phi factor} is uniformly~$\oh(1)$, which establishes~\eqref{eq:phi est}.
\end{proof}

\section{The Distribution of the Number of Words}\label{se:law}

A natural parameter to consider is the number~$W_n$ of words in a random finite language of total
word length~$n$.
(The alphabet size~$m\geq2$ is fixed throughout this section.)
The appropriate bivariate ogf, with~$z$ marking total word length and~$u$ marking number of words,
is given by
\begin{equation*}
  F(z,u) :=  \exp \left( \sum_{k=1}^\infty \frac{(-1)^{k-1}}{k} \frac{m z^k u^k}{1-mz^k} \right).
\end{equation*}
The expected number of words is then
\begin{equation}\label{eq:expec}
  \expec[W_n] = f_n^{-1} [z^n]\partial_u F(z,u)|_{u=1}.
\end{equation}
Notice that
\begin{equation}\label{eq:F F old}
  \partial_u F(z,u)|_{u=1} = F(z) \sum_{k=1}^\infty \frac{(-1)^{k-1}mz^k}{1-mz^k},
\end{equation}
so that the asymptotic analysis of~$[z^n]\partial_u F(z,u)|_{u=1}$ is an easy extension of
the one of~$f_n=[z^n]F(z)$ in Section~\ref{se:asympt n}: Close to the saddle point, the new factor
resulting from the right-hand side of~\eqref{eq:F F old} is
\[
  \frac{1}{1-z} = \sqrt{n}(1+\oh(1)).
\]
Hence $[z^n]\partial_u F(z,u)|_{u=1} \sim \sqrt{n}f_n$, so that, by~\eqref{eq:expec},
the expectation of~$W_n$ satisfies
\[
  \expec[W_n] \sim  \sqrt{n}, \qquad n\to\infty.
\]
Similarly, one can obtain the asymptotics $\sigma(W_n)\sim n^{1/4}/\sqrt{2}$ for
the standard deviation.

\begin{theorem}
  The number of words~$W_n$ in a random finite language admits a Gaussian limit law:
  \[
    \frac{W_n - a_n}{b_n} \to \mathcal{N}(0,1), \qquad n\to\infty,
  \]
  in distribution, where the scaling constants satisfy $a_n\sim\sqrt{n}$ and $b_n\sim n^{1/4}/\sqrt{2}$.
\end{theorem}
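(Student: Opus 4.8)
The plan is to establish a quasi-powers approximation for the probability generating function of $W_n$ and then invoke Hwang's quasi-powers theorem \cite[Thm.~IX.8]{FlSe09}. Writing $p_n(u):=\expec[u^{W_n}]=f_n^{-1}[z^n]F(z,u)$ (cf.\ \eqref{eq:expec}), the whole task reduces to a \emph{uniform} saddle point analysis of $[z^n]F(z,u)$ as $u$ ranges over a fixed complex neighborhood of~$1$. As in the proof of Theorem~\ref{thm:asympt n}, I would rescale $z\mapsto z/m$ so that the singularity sits at $z=1$, giving $[z^n]F(z,u)=m^n[z^n]\exp\bigl(uz/(1-z)\bigr)\,\phi(z/m;u)$ with $\phi(\cdot;u)$ analytic for $|z|<\sqrt m$ uniformly in bounded~$u$. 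The only structural change from the case $u=1$ is that $\exp(z/(1-z))$ is replaced by $\exp(uz/(1-z))$, which moves the saddle point: solving $\partial_z\bigl(uz/(1-z)-n\log z\bigr)=0$, i.e.\ $uz=n(1-z)^2$, yields the $u$-dependent saddle $\hat z(u)=1-\sqrt{u/n}+\Oh(1/n)$, an analytic function of $u$ near $1$ reducing to $1-1/\sqrt n$ at $u=1$. Crucially, one must deform the contour so that it passes through $\hat z(u)$ (for $u\neq1$ this is no longer the circle of radius $1-1/\sqrt n$), since the fixed $u=1$ circle would record the wrong exponential order.

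Carrying the local expansion \eqref{eq:loc expans} and the tail bound \eqref{eq:tail est} over to this setting—now on the circle of radius $|\hat z(u)|$ through the moving saddle—the phase $uz/(1-z)-n\log z$ evaluates to $2\sqrt{un}+\Oh(1)$ at $\hat z(u)$, while the Gaussian width stays of order $n^{-3/4}$ with a $u$-dependent constant. I therefore expect
\[
  [z^n]F(z,u)\ \sim\ g(u)\,\frac{m^n}{n^{3/4}}\,\e^{2\sqrt{un}},\qquad n\to\infty,
\]
uniformly for $u$ near $1$, where $g$ is analytic and nonvanishing with $g(1)=\phi(1/m)/(2\sqrt{\e\pi})$ (consistency with \eqref{eq:asympt n}). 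Dividing by $f_n$ and setting $u=\e^s$ then gives, since $\sqrt{\e^s n}=\sqrt n\,\e^{s/2}$,
\[
  p_n(\e^s)\ \sim\ \frac{g(\e^s)}{g(1)}\,\exp\!\bigl(2\sqrt n\,(\e^{s/2}-1)\bigr),\qquad n\to\infty,
\]
uniformly for $s$ in a neighborhood of $0$, with relative error a negative power of $n$ (the full expansion \eqref{eq:full expans} suggests error $\Oh(n^{-1/4})$).

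This is exactly the quasi-powers form, with scaling parameter $\beta_n=\sqrt n\to\infty$, with $U(s)=2(\e^{s/2}-1)$, and with $V(s)=\log\bigl(g(\e^s)/g(1)\bigr)$ analytic at $0$ because $g$ is analytic and nonzero there. Since $U'(0)=1$ and $U''(0)=\tfrac12\neq0$, Hwang's theorem \cite[Thm.~IX.8]{FlSe09} yields the Gaussian limit law, with mean $a_n\sim\beta_n U'(0)=\sqrt n$ and variance $b_n^2\sim\beta_n U''(0)=\sqrt n/2$, hence $b_n\sim n^{1/4}/\sqrt2$. These match the mean $\expec[W_n]\sim\sqrt n$ and standard deviation $\sigma(W_n)\sim n^{1/4}/\sqrt2$ computed above, as they must.

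The main obstacle is the uniformity of the saddle point estimate over the complex parameter~$u$. Concretely, one must verify that the local Gaussian approximation near $\hat z(u)$ and, above all, the tail estimate \eqref{eq:tail est} persist uniformly as $u$ varies in a neighborhood of~$1$, on a contour whose radius and saddle argument now both depend on~$u$; this requires controlling the analytic factor $\phi(z/m;u)$ and the curvature $1/(1-z)$ along that deformed contour. Once this uniform approximation is secured—together with the observation that $g$ is analytic and nonvanishing near $u=1$, so that $V$ is analytic at $s=0$—the application of the quasi-powers theorem is routine.
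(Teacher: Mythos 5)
Your proposal follows essentially the same route as the paper: a uniform saddle-point analysis of $[z^n]F(z,u)$ for $u$ near $1$ with the moving saddle $\hat z(u)=1-\sqrt{u/n}+\Oh(1/n)$, leading to $\expec[u^{W_n}]\sim\exp(h_n(u))$ with dominant term $2(\sqrt u-1)\sqrt n$, and then a quasi-powers-type central limit theorem; you correctly identify the $u$-dependent contour and the uniformity of the tail estimate as the real technical content, just as the paper implicitly does. The only difference is the final black box---you invoke Hwang's quasi-powers theorem, whereas the paper uses the generalized variant (Theorem~9.13 of Flajolet--Sedgewick, due to Sachkov) with conditions on $h_n'(1)$, $h_n''(1)$, $h_n'''(1)$---which is immaterial here since $h_n(u)$ is exactly of the form $\sqrt n\,U(\log u)+V(\log u)$.
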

\begin{proof}
  As is well known, combinatorial limit laws can often
  be obtained by an asymptotic analysis of the probability generating function
  \begin{equation}\label{eq:pgf}
    \expec[u^{W_n}] = f_n^{-1} [z^n]F(z,u).
  \end{equation}
  Again, we adapt the proof of Theorem~\ref{thm:asympt n}.
  If~$u$ ranges in a fixed small neighbourhood of~$u=1$, the expansion~\eqref{eq:loc expans}
  generalizes to the uniform local expansion
  \[  
    \frac{F(z/m,u)}{z^{n+1}} = \phi(1/m,u;m) \exp\bigl(-\tfrac12 u + 2\sqrt{un} - u^{-1/2}n^{3/2}\theta^2 \bigr)
    \times\bigl(1 + \Oh(n^{1/2 - \alpha})\bigr),
  \]
  where
  \[
    \phi(z,u;m) := \exp \left( \sum_{k=2}^\infty \frac{(-1)^{k-1}}{k} \frac{mz^ku^k}{1-mz^k} \right).
  \]
  Integrating from $\theta=-n^{-\alpha}$ to~$n^{-\alpha}$, and taking into account~\eqref{eq:asympt n},
  we infer that~\eqref{eq:pgf} has the uniform asymptotics
  \[
    \expec[u^{W_n}] \sim \exp(h_n(u)), \qquad n\to\infty,
  \]
  with
  \[
    h_n(u) := 2(\sqrt{u}-1)\sqrt{n} + \tfrac14 \log u + \log \frac{\phi(1/m,u;m)}{\phi(1/m;m)}.
  \]
  Note that, for $n\to\infty$,
  \begin{align*}
    h_n'(1) &= \sqrt{n} + \Oh(1), \\
    h_n''(1) &= -\tfrac12 \sqrt{n} + \Oh(1), \\
    h_n'''(1) &= \tfrac34 \sqrt{n} + \Oh(1), \\
  \end{align*}
  so that the function~$h_n(u)$ satisfies the conditions of~\cite[Theorem~9.13]{FlSe09}, itself
  taken from~\cite{Sa78}. We conclude that
  \[
    \frac{W_n - h_n'(1)}{(h_n'(1) + h_n''(1))^{1/2}}
  \]
  converges in distribution to a standard normal random variable.
\end{proof}

\bibliographystyle{siam}
\bibliography{../gerhold}

\end{document}